\documentclass[12pt,a4paper,reqno]{amsart}

\usepackage[T1]{fontenc}
\usepackage[utf8]{inputenc}
\usepackage{lmodern}
\usepackage{microtype}
\usepackage{mathtools,amssymb}
\usepackage{tikz}
\usepackage[hidelinks]{hyperref}

\allowdisplaybreaks[2]
\numberwithin{equation}{section}

\newtheorem{theorem}{Theorem}[section]
\newtheorem{proposition}[theorem]{Proposition}
\newtheorem{lemma}[theorem]{Lemma}
\newtheorem{corollary}[theorem]{Corollary}
\theoremstyle{definition}
\newtheorem{definition}[theorem]{Definition}

\theoremstyle{remark}
\newtheorem{remark}[theorem]{Remark}

\newcommand{\R}{\mathbb R}
\newcommand{\Lah}{\mathcal L}
\newcommand{\Match}{\mathcal M}
\newcommand{\e}{\mathrm e}

\title[Real-rootedness of $\tau$ under graph joins]
{Real-rootedness of the $\tau$-polynomial under graph joins}

\author{Mingyang Kang}
\address[Mingyang Kang]{Center for Combinatorics, LPMC, 
Nankai University, Tianjin 300071, P.R. China}
\email{2120240005@mail.nankai.edu.cn}

\author{Zhixin Liu}
\address[Zhixin Liu]{School of Mathematics, Tianjin University,
Tianjin 300350, P.R. China}
\email{zhixinliu@tju.edu.cn}

\author{Sophie C.C. Sun}
\address[Sophie C.C. Sun]{Department of Mathematics,
Tianjin University of Finance and Economics,
Tianjin 300222, P.R. China}
\email{sophiesun@tjufe.edu.cn}

\author{Philip B. Zhang}
\address[Philip B. Zhang]{College of Mathematical Sciences \& Institute of Mathematics and Interdisciplinary Sciences,
Tianjin Normal University,
Tianjin 300387, P.R. China}
\email{zhang@tjnu.edu.cn}

\subjclass[2020]{05C31,  05A10,  05C70, 26C10}
\keywords{chromatic polynomial, $\tau$-polynomial,  
real-rootedness, Lah numbers, matching polynomial}

\begin{document}

\begin{abstract}
For a simple graph $G$ with $n$ vertices, write its chromatic polynomial in
 the rising factorial basis as
\[
 \chi_G(x)=\sum_{i=0}^{n}(-1)^{n-i}c_i(G)\langle x\rangle_i,
 \
\]
where $ \langle x\rangle_i=x(x+1)\cdots(x+i-1).$ The associated $\tau$-polynomial 
\[
 \tau_G(x)=\sum_{i=0}^{n}c_i(G)x^i
\]
  was defined and  systematically investigated by Brenti in 1992. In this paper, we
prove that if the $\tau$-polynomials of two vertex-disjoint simple graphs
$G$ and $H$ have only real zeros, then the $\tau$-polynomial of their join
$G\vee H$ has only real zeros. This settles  a conjecture posed by Brenti, Royle 
and Wagner since  1994.
\end{abstract}

\maketitle

\section{Introduction}\label{sec:introduction}

Throughout this paper, a graph $G=(V,E)$ is finite and simple.  It is well known that the
chromatic polynomial $\chi_G(x)$ of a graph with $n$ vertices is monic of
degree $n$.  For $i\geq0$, write
\[
 \langle x\rangle_i=x(x+1)\cdots(x+i-1)\ \text{for $i\geq 1$},
 \qquad \langle x\rangle_0=1,
\]
for the $i$-th rising factorial.  There are unique coefficients $c_i(G)$
such that
\begin{equation*}
 \chi_G(x)=\sum_{i=0}^{n}(-1)^{n-i}c_i(G)\langle x\rangle_i.
\end{equation*}
The polynomial
\[
\tau_G(x)=\sum_{i=0}^{n}c_i(G)x^i
\]
was introduced by Brenti \cite[Section~5]{Brenti92}, where fundamental  properties of $\tau_G(x)$ have been developed.

The \emph{join} $G\vee H$ of two vertex-disjoint graphs is obtained from
their disjoint union by adding every edge between $V(G)$ and $V(H)$.  

The goal of this paper is to 
prove the following conjecture by Brenti, Royle and Wagner {\cite{BRW94}} proposed in 1994.

\begin{theorem}[{\cite[Conjecture~6.6]{BRW94}}]\label{thm:graph-join}
Let $G$ and $H$ be vertex-disjoint graphs.  If both $\tau_G(x)$ and
$\tau_H(x)$ have only real zeros, then $\tau_{G\vee H}(x)$ has only real
zeros.
\end{theorem}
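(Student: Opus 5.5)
The plan is to translate the join into a simple algebraic operation on coefficient sequences, and then recognise that operation as a stability-preserving linear map in the sense of Borcea--Brändén and Lieb--Sokal.

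\textbf{Step 1 (join in the falling factorial basis).} Write $\chi_G(x)=\sum_k a_k(G)(x)_k$, where $(x)_k$ is the falling factorial and $a_k(G)$ counts partitions of $V(G)$ into $k$ independent sets. A proper colouring of $G\vee H$ uses disjoint colour sets on $V(G)$ and $V(H)$. Hence $a_k(G\vee H)=\sum_{i+j=k}a_i(G)a_j(H)$, so the coefficient sequences convolve. Since $\langle x\rangle_i=(-1)^i(-x)_i$, we have $(-1)^n\chi_G(-x)=\sum_i c_i(G)(x)_i$. Combining this with $\langle x\rangle_k=\sum_j L(k,j)(x)_j$ (Lah numbers) gives
\[
c_j(G)=\sum_k(-1)^{n-k}a_k(G)L(k,j).
\]
Thus $\tau_G=\Lah(\tilde a_G)$, where $\tilde a_G(x)=\sum_k(-1)^{n-k}a_k(G)x^k$ and $\Lah\colon x^k\mapsto\sum_jL(k,j)x^j$. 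Convolution gives $\tilde a_{G\vee H}=\tilde a_G\,\tilde a_H$, since the signs $(-1)^{n-k}$ are compatible with $n=|V(G)|+|V(H)|$. Therefore
\[
\tau_{G\vee H}=\Lah\bigl(\Lah^{-1}\tau_G\cdot\Lah^{-1}\tau_H\bigr),
\]
and $\Lah^{-1}$ is the signed Lah transform.

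\textbf{Step 2 (bilinear formula).} Compute the bilinear map $B(x^i,x^j)=\Lah(\Lah^{-1}x^i\cdot\Lah^{-1}x^j)$ explicitly. I expect it to be a matching-type polynomial of the complete bipartite graph $K_{i,j}$, for instance
\[
B(x^i,x^j)=\sum_{k}\epsilon_k\, k!\binom ik\binom jk x^{i+j-k}
\]
with a uniform sign $\epsilon_k=\pm1$ (perhaps $(-1)^k$). To prove such a formula, I would give a combinatorial model of Lah numbers as partitions into linearly ordered blocks: merging ordered lists coming from the two sides should produce exactly $k$ cross pairs. Alternatively, I would verify it via the exponential generating function $\sum_k L(k,j)t^k/k!=(t/(1-t))^j/j!$, which converts $\Lah$ into a substitution. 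Granting the formula, for $f=\sum_i c_ix^i$ and $g=\sum_j d_jx^j$ one gets the operator form
\[
B(f,g)(x)=\Bigl[\sum_k \frac{\epsilon_k}{k!}\,z^{k}\,f^{(k)}(y)\,g^{(k)}(w)\Bigr]_{y=w=z=x}.
\]
After homogenising suitably, this is $\exp(\pm\partial_y\partial_w)$ applied to $f(y)g(w)$, followed by diagonalisation, with a factor of $x^k$ absorbed by degree bookkeeping.

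\textbf{Step 3 (preserving real zeros).} If $f$ and $g$ are real-rooted, then $f(y)g(w)$ is real stable in two variables. The operator $e^{-\partial_y\partial_w}$ is the limit of $(1-\partial_y\partial_w/m)^m$. Since $1-w_1w_2$ is real stable, the Lieb--Sokal theorem shows each $(1-\partial_y\partial_w/m)^m$ preserves real stability, and Hurwitz's theorem passes this to the limit. Diagonalisation preserves real stability, so the result is real-rooted. All $c_i\ge0$, which I expect to be known from Brenti's work. I would use this positivity, together with real-rootedness, to control the extra $x^{i+j-k}$ weighting. Concretely, I would replace $f$ and $g$ by their reversals, so that the weighting becomes a pure $\partial$-operator, and reverse back at the end.

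\textbf{Main obstacle.} I expect the hardest part to be Step 2 together with the sign $\epsilon_k$. The sign must come out so that the symbol is $1-w_1w_2$ (stable) rather than $1+w_1w_2$ (not stable). If it comes out wrong, I would instead try the variable change $x\mapsto -x$ on one factor, or interpret $B(f,g)$ as a Hadamard/Schur--Szegő composition with the matching polynomial of $K_{i,j}$, whose real-rootedness is classical (Heilmann--Lieb). I would then check the final argument on $G=K_1$, $H=K_1$ before generalising.
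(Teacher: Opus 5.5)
Your Step~1 is correct and is exactly the paper's Proposition~\ref{prop:join-star}: $\tau_{G\vee H}=\tau_G\star\tau_H$ with $f\star g=\Lah(\Lah^{-1}f\cdot\Lah^{-1}g)$.

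\textbf{Step 2 fails.} The guessed formula for $x^i\star x^j$ is wrong, and not only in the sign $\epsilon_k$. Already $x\star x=\Lah(y^2)=x^2+2x$, whereas your formula gives $x^2\pm x$. This is $\tau_{K_2}$, so your own test case $K_1\vee K_1$ would expose it. Likewise $x^2\star x^3=x^5+12x^4+42x^3+48x^2+12x$ reaches down to degree $1$, while your formula stops at degree $\max(i,j)=3$. Structurally, any formula $\sum_k\epsilon_k\,k!\binom ik\binom jk x^{i+j-k}$ has exponential generating function $e^{x(u+v)}h(xuv)$, whose only monomials linear in $x$ are $xu$, $xv$, $xuv$. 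The true generating function is $\sum_{p,q}(x^p\star x^q)\frac{u^pv^q}{p!\,q!}=\exp\bigl(x\frac{u+v+2uv}{1-uv}\bigr)$, which contains $xu^2v$. So $\star$ is not $\exp(\pm z\partial_y\partial_w)$ followed by diagonalisation, and Step~3 has no valid operator to act on.

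\textbf{Step 3 also has an independent problem.} It tries to deduce real-rootedness of the output from real-rootedness of $f,g$, using positivity only for degree bookkeeping. But $\star$ does not preserve real-rootedness in general: $(x-1)\star(x-1)=x^2+1$. The location of the zeros in $(-\infty,0]$, and in the paper also the divisibility $x\mid f,g$, must enter the argument essentially.

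\textbf{What is missing} is the correct combinatorial model and the way the roots are fed into it.
\begin{itemize}
\item The paper proves (Theorem~\ref{thm:monomial-model}) that $x^p\star x^q=\sum_M x^{p+q-|M|}$, summed over matchings of $\Gamma_{p,q}$. This graph is $K_{p,q-1}\sqcup K_{q,p-1}$ plus a vertex $\omega$ joined to all $p+q$ vertices on one side, so the sum equals $(1+\partial_x)\bigl[x^{p+q}Z_{p,q-1}(1/x)Z_{q,p-1}(1/x)\bigr]$. This is verified by matching the e.g.f.\ above through a two-variable change of variables.
\item Using real-rootedness, $c_0=0$ and $c_i\ge0$, it writes $f=x\prod_{i<p}(x+\alpha_i)$ and $g=x\prod_{j<q}(x+\beta_j)$ with $\alpha_i,\beta_j\ge0$.
\item It adds edges $r_i\widehat r_i$ and $b_j\widehat b_j$ with weights $\alpha_i$ and $\beta_j$. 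Then $f\star g$ is the reversed weighted matching polynomial of this single bipartite graph.
\item Heilmann--Lieb, which needs precisely nonnegative weights, puts all zeros in $(-\infty,0]$.
\end{itemize}
Your Heilmann--Lieb fallback points in the right direction. Without this graph (the universal vertex and the asymmetric pair $K_{p,q-1}$, $K_{q,p-1}$) and the encoding of roots as nonnegative edge weights, however, the proof does not go through.
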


To prove Theorem \ref{thm:graph-join}, we shall define a star product $\star$ on
$\R[x]$ using the Lah numbers. A crucial observation is 
\begin{equation}\label{cb-8-nj}
 \tau_{G\vee H}(x)=\tau_G(x)\star\tau_H(x),   
\end{equation}
see  Proposition \ref{prop:join-star}, which is the starting point of this paper. 

The following is our main result, which, combined with \eqref{cb-8-nj}, will allow us to complete the proof of Theorem  \ref{thm:graph-join}. 

\begin{theorem}\label{thm:abstract-main}
Let $f,g\in\R[x]\setminus\{0\}$.  Suppose that $x$ divides both $f$ and
$g$, and that every zero of each polynomial belongs to $(-\infty,0]$.
Then every zero of $f\star g$ also belongs to $(-\infty,0]$.
\end{theorem}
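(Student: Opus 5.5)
The plan is to realize $f\star g$ as the image of the bivariate polynomial $f(x)g(y)$ under a linear operator $T$ on $\R[x,y]$, followed by the diagonal substitution $y=x$. We would then show that $T$ preserves real stability, and deduce the statement from the classical facts about stable polynomials.

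\emph{Step 1: an explicit formula for $x^i\star x^j$.} The definition of $\star$ encodes the join rule, which is additive in the falling-factorial basis: if $\chi_G=\sum a_i(x)_i$ and $\chi_H=\sum b_j(x)_j$, then $\chi_{G\vee H}=\sum a_ib_j(x)_{i+j}$. It also encodes the change of basis $(x)_n=\sum_k(-1)^{n-k}L(n,k)\langle x\rangle_k$, with Lah numbers $L(n,k)=\binom{n-1}{k-1}\frac{n!}{k!}$. Using the exponential generating function
\[
\sum_{n\ge k}L(n,k)\frac{t^n}{n!}=\frac{1}{k!}\Bigl(\frac{t}{1-t}\Bigr)^k,
\]
we would first compute $x^i\star x^j$ for $i,j\ge1$ in closed form. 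We expect a sum $\sum_k\lambda_{i,j,k}x^k$ whose coefficients are products of factorial ratios. The hypothesis $x\mid f,g$ matters here: it removes the degenerate index $0$, where the Lah numbers behave differently.

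\emph{Step 2: recognize the structure.} The aim is to factor the bilinear map $(f,g)\mapsto f\star g$ into operations known to preserve real-rootedness with zeros in $(-\infty,0]$. Concretely, we expect to write
\[
f\star g=\Phi\bigl(\Psi_1(f)\cdot\Psi_2(g)\bigr),
\]
where the $\Psi$'s and $\Phi$ are diagonal multiplier operators $x^n\mapsto\gamma_nx^n$ (factorial-type sequences such as $n!$, $1/n!$, $(n-1)!$), possibly composed with $x\mapsto x/(1+x)$-type M\"obius changes arising from the generating function $t/(1-t)$.

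For multiplier sequences, the P\'olya--Schur theorem applies. We would check that the relevant sequences are multiplier sequences for polynomials with nonpositive zeros, equivalently that the associated exponential generating functions lie in the Laguerre--P\'olya class of type I. Where a direct factorization fails, we would fall back on the Borcea--Br\"and\'en characterization. A linear operator $T:\R[x,y]\to\R[x,y]$ of finite order preserves real stability in the relevant orthant if its symbol $T\bigl[e^{-xu-yv}\bigr]$ (or the truncated polynomial symbol) is real stable. Taking $f(x)g(y)$, which is real stable because $f$ and $g$ are real-rooted, the image $T[f(x)g(y)]$ is then real stable, and setting $y=x$ gives a real-rooted univariate polynomial.

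\emph{Step 3: locating the zeros.} Real-rootedness alone is not enough; we need every zero in $(-\infty,0]$. We would obtain this from nonnegativity of coefficients. With $f,g$ having nonpositive zeros, their coefficients are nonnegative, and Lah numbers are positive. So $f\star g$ has nonnegative coefficients, and a real-rooted polynomial with nonnegative coefficients has all its zeros in $(-\infty,0]$.

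The main obstacle is Step 2: verifying that the symbol of the Lah-type operator is stable, or finding the right factorization into multiplier sequences. The inverse Lah transform does not preserve real-rootedness, so the naive route of conjugating ordinary multiplication by the Lah transform fails. The operator has to be handled as a single bivariate object.

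I would first try to compute the symbol $T\bigl[(1+xu)^{N}(1+yv)^{M}\bigr]$ in closed form via the generating function above. I would then exhibit it as a product of factors of the form $(a+bx+cu+dxu)$ with $ad-bc$ of the correct sign, each of which is real stable. If that fails, I would instead prove interlacing of $x^{-1}f\star g$ and $f\star g$ inductively on $\deg f+\deg g$, using a recursion for $\star$ derived from the Lah recurrence $L(n+1,k)=L(n,k-1)+(n+k)L(n,k)$.
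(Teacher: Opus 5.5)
Your proposal has a genuine gap. Step~2 carries the entire content of the theorem, and it is only a list of possible strategies: a multiplier-sequence factorization, a Borcea--Br\"and\'en symbol computation, and an interlacing induction. None of them is carried out, and the first cannot work as stated. Any composition of diagonal operators $x^n\mapsto\gamma_nx^n$ with ordinary multiplication sends $(x^p,x^q)$ to a scalar multiple of $x^{p+q}$. But $x\star x=x^2+2x$ and $x^2\star x^3=x^5+12x^4+42x^3+48x^2+12x$. More generally $x\star h=x(1+\partial_x)^2h$, and the kernel $\sum_{p,q\ge0}(x^p\star x^q)\frac{u^p}{p!}\frac{v^q}{q!}=\exp\bigl(x\frac{u+v+2uv}{1-uv}\bigr)$ couples $u$ and $v$ through $1-uv$. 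The M\"obius modification you mention is never specified.

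For the Borcea--Br\"and\'en route, you do not identify the bivariate operator whose diagonal is $f\star g$, let alone compute its symbol. Since the statement concerns only $f,g\in x\R[x]$ with zeros in $(-\infty,0]$, you would have to work with $x^ay^b\mapsto x^{a+1}\star x^{b+1}$. You would also need a version of the characterization adapted to that restricted class.

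Step~3 also needs repair. Positivity of the Lah numbers does not give nonnegative coefficients of $f\star g$, because $\Lah^{-1}$ has alternating signs (e.g.\ $\Lah^{-1}(x^2)=y^2-2y$). The nonnegativity is true, but it has to be read off from the kernel above, whose exponent has nonnegative coefficients in $u,v$.

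The missing idea is a positive combinatorial model that reduces the problem to a known real-rootedness theorem. The paper proves (Theorem~\ref{thm:monomial-model}) that for $p,q\ge1$, $x^p\star x^q=\sum_{M\in\Match(\Gamma_{p,q})}x^{p+q-|M|}$. Here $\Gamma_{p,q}$ is $K_{p,q-1}\sqcup K_{q,p-1}$ together with a vertex $\omega$ joined to all of $\mathsf R_p\sqcup\mathsf B_q$. This is derived from the same kernel, via a Lagrange-type coefficient extraction for the $\omega$-free part and the factor $(1+\partial_x)$ accounting for $\omega$.

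Write $f=x\prod_i(x+\alpha_i)$ and $g=x\prod_j(x+\beta_j)$ with $\alpha_i,\beta_j\ge0$. Each root becomes the weight of one extra edge $\{r_i,\widehat r_i\}$ or $\{b_j,\widehat b_j\}$. Using such an edge deletes its two endpoints and turns $\Gamma_{p,q}$ into $\Gamma_{p-1,q}$ or $\Gamma_{p,q-1}$. By multilinearity, $f\star g=x^{p+q}Z_{\Gamma_{p,q}(\boldsymbol\alpha,\boldsymbol\beta),w}(1/x)$. The Heilmann--Lieb theorem then puts all zeros in $(-\infty,0]$ at once, with no separate sign argument.

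This also explains the hypothesis $x\mid f,g$ precisely, beyond the vague ``degenerate index $0$'' you give. The distinguished vertices $r_\ast,b_\ast$ carry no extra edge and are never deleted. So only products $x^{p'}\star x^{q'}$ with $p',q'\ge1$ occur, which is exactly the range where the matching model holds.
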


Indeed, by  \cite[Corollary~5.4]{Brenti92}, if $G$ is nonempty, then
\begin{equation*}
 c_0(G)=0,\qquad c_j(G)\in\mathbb Z_{>0}\ (1\leq j\leq n),
 \qquad c_n(G)=1.
\end{equation*}
Consequently, $x$ divides $\tau_G(x)$, and every real zero of $\tau_G(x)$
is nonpositive.  So, combining Theorem~\ref{thm:abstract-main} with 
\eqref{cb-8-nj} leads to a proof of  Theorem \ref{thm:graph-join}.

Our proof of Theorem \ref{thm:abstract-main} mainly relies on two ingredients. 
The first one is a combinatorial formula for the product $x^p \star x^q$, see Theorem~\ref{thm:monomial-model}. More specifically, the formula says that $x^p \star x^q$ can be realized as a weighted counting of matchings over a certain bipartite graph $\Gamma_{p,q}$. As an application, we may obtain a combinatorial formula for $f\star g $  in terms of matchings over a weighted  bipartite graph determined by $\Gamma_{p,q}$ as well as the   real roots of $f$ and $g$, see Theorem \ref{thm:weighted-model}.   The second ingredient is a real-rootedness property concerning a signed counting of matchings over a weighted graph  due to  Heilmann and Lieb \cite{HL72}, see Theorem 
\ref{thm:heilmann-lieb}. This, combined with Theorem \ref{thm:weighted-model}, will  yield a proof of Theorem \ref{thm:abstract-main}. 

This paper is organized as follows. Section \ref{sec:lah} defines the star product on polynomials based on the Lah numbers. In particular, we justify the relation \eqref{cb-8-nj} in Proposition \ref{prop:join-star}. In Section \ref{sec:matching}, we prove the combinatorial formula for the product $x^p \star x^q$ which is achieved by   counting   matchings over a bipartite graph $\Gamma_{p,q}$. Section \ref{se-4-k} is devoted to a proof of   Theorem \ref{thm:abstract-main}.

\section{The Lah transform and a star product}\label{sec:lah}

For $i\geq0$, let
\[
 (x)_i=x(x-1)\cdots(x-i+1)\ \text{for $i\geq 1$},
 \qquad (x)_0=1,
\]
denote the $i$-th falling factorial.  The unsigned Lah numbers are
\[
 L(n,k)=\frac{n!}{k!}\binom{n-1}{k-1}
 \qquad(1\leq k\leq n),
\]
with $L(0,0)=1$ and $L(n,k)=0$ otherwise.  They are the connection
coefficients between the rising and falling factorial bases
\cite[p.~156, item~2]{Comtet}:
\begin{equation*}
 \langle x\rangle_n=\sum_{k=0}^{n}L(n,k)(x)_k.
\end{equation*}

Let $\Lah:\R[y]\to\R[x]$ be the linear transformation
\begin{equation*}
 \Lah(y^n)=\sum_{k=0}^{n}L(n,k)x^k.
\end{equation*}
Its matrix in the monomial bases is unitriangular, so $\Lah$ is an
isomorphism.  The inverse matrix gives
\begin{equation}\label{eq:Lah-inverse-intro}
 \Lah^{-1}(x^n)=\sum_{k=0}^{n}(-1)^{n-k}L(n,k)y^k,
\end{equation}
and, equivalently,
\begin{equation}\label{eq:inverse-factorial-change}
 (x)_n=\sum_{k=0}^{n}(-1)^{n-k}L(n,k)\langle x\rangle_k.
\end{equation}
For $f,g\in\R[x]$, define
\begin{equation}\label{eq:star-intro}
 f\star g=\Lah\bigl(\Lah^{-1}(f)\Lah^{-1}(g)\bigr).
\end{equation}
By definition, it is easily seen that  $\star$ is bilinear, commutative, associative, and has identity $1$.

Let us   illustrate the star product 
by considering $x^2\star x^3 $. 
By \eqref{eq:Lah-inverse-intro},
we compute that 
\[
 \Lah^{-1}(x^2)=y^2-2y,
 \qquad
 \Lah^{-1}(x^3)=y^3-6y^2+6y.
\]
So we have 
\begin{align*}
 x^2\star x^3
 &=\Lah\bigl((y^2-2y)(y^3-6y^2+6y)\bigr)\\
 &=x^5+12x^4+42x^3+48x^2+12x.
\end{align*}

\begin{remark}
The operator $\Lah$ and the product in \eqref{eq:star-intro} also appeared in~\cite{Jum25}. It should also be noted that this product is different from the star product used in
\cite[Proposition~5.2]{BRW94}.
\end{remark}

The $\sigma$-polynomial of $G$ is defined by 
\[
\sigma_G(x)=\sum_{i=0}^{n}a_i(G)x^i,
\]
where
\begin{equation}\label{nxj-08}
  \chi_G(x)=\sum_{i=0}^{n}a_i(G)(x)_i.   
\end{equation}
We refer to  \cite{Brenti92,BRW94} for basic properties concerning the $\sigma$-polynomials.

\begin{proposition}\label{prop:join-star}
We have the following statements.
\begin{itemize}
    \item[(1)] For every graph $G$ with $n$ vertices, one has 
\begin{equation}\label{bn-09-8}
\tau_G(x)=\Lah((-1)^n\sigma_G(-y)).    
\end{equation}

    \item[(2)] 
     If $G$ and $H$ are
vertex-disjoint, then
\begin{equation*}
 \tau_{G\vee H}(x)=\tau_G(x)\star\tau_H(x).
\end{equation*}
\end{itemize}

\end{proposition}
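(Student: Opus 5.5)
For part (1), I will compare the two expansions of $\chi_G$ directly. Starting from \eqref{nxj-08}, I write $\chi_G(x)=\sum_i a_i(G)(x)_i$. I then substitute \eqref{eq:inverse-factorial-change}, which expresses each $(x)_i$ in the rising basis, and obtain
\[
\chi_G(x)=\sum_k\Bigl(\sum_i a_i(G)(-1)^{i-k}L(i,k)\Bigr)\langle x\rangle_k .
\]
Uniqueness of coefficients in the rising basis then gives
\[
(-1)^{n-k}c_k(G)=\sum_i(-1)^{i-k}L(i,k)a_i(G),
\]
that is, $c_k(G)=\sum_i(-1)^{n-i}a_i(G)L(i,k)$. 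On the other hand,
\[
(-1)^n\sigma_G(-y)=\sum_i(-1)^{n-i}a_i(G)y^i .
\]
Applying $\Lah$ term by term sends $y^i$ to $\sum_k L(i,k)x^k$, so the coefficient of $x^k$ in $\Lah\bigl((-1)^n\sigma_G(-y)\bigr)$ is exactly $\sum_i(-1)^{n-i}a_i(G)L(i,k)=c_k(G)$. This proves \eqref{bn-09-8}, and it amounts to bookkeeping of signs.

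For part (2), I will use the standard fact that the $\sigma$-polynomial is multiplicative under joins: $\sigma_{G\vee H}=\sigma_G\sigma_H$ (Brenti; \cite{BRW94}). If I cannot simply cite it, I will prove it from the partition formula $\chi_G(x)=\sum_k a_k(G)(x)_k$, where $a_k(G)$ counts partitions of $V(G)$ into $k$ nonempty independent sets. In $G\vee H$ every independent set lies entirely in $V(G)$ or entirely in $V(H)$, so a partition of $V(G\vee H)$ into independent sets is the same thing as a pair consisting of one such partition of $V(G)$ and one of $V(H)$. Hence
\[
a_k(G\vee H)=\sum_{i+j=k}a_i(G)a_j(H),
\]
which is the claimed multiplicativity.

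With that in hand, write $n=|V(G)|$ and $m=|V(H)|$. Using (1) together with the fact that $\Lah$ is a linear isomorphism, I get $\Lah^{-1}(\tau_G)=(-1)^n\sigma_G(-y)$, and similarly for $H$. Therefore
\begin{align*}
\tau_G\star\tau_H &=\Lah\bigl((-1)^{n}\sigma_G(-y)\,(-1)^{m}\sigma_H(-y)\bigr)\\
&=\Lah\bigl((-1)^{n+m}\sigma_{G\vee H}(-y)\bigr)=\tau_{G\vee H},
\end{align*}
where the last equality is (1) applied to $G\vee H$, which has $n+m$ vertices.

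The only substantive step is the multiplicativity of $\sigma$ under joins. It is classical, and the independent-set partition argument above settles it. Everything else is routine coefficient comparison.
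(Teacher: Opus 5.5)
Your proposal is correct and follows essentially the same route as the paper: part (1) by substituting the falling-to-rising Lah expansion and comparing coefficients, and part (2) by combining (1) with the multiplicativity $\sigma_{G\vee H}=\sigma_G\sigma_H$. The one difference is that the paper simply cites Brenti (Theorem 3.13) for this multiplicativity. Your stable-partition argument is a valid self-contained proof of it, since every independent set of $G\vee H$ lies entirely in $V(G)$ or entirely in $V(H)$.
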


\begin{proof}
(1) Substituting \eqref{eq:inverse-factorial-change} into \eqref{nxj-08} gives
\begin{align*}
 \chi_G(x)
 &=\sum_{i=0}^{n}a_i(G)
   \left(\sum_{k=0}^{i}(-1)^{i-k}L(i,k)\langle x\rangle_k\right)\\
 &=\sum_{k=0}^{n}(-1)^{n-k}
   \left(\sum_{i=k}^{n}(-1)^{n-i}a_i(G)L(i,k)\right)
   \langle x\rangle_k.
\end{align*}
This implies that the coefficient $c_k(G)$ of $x^k$
  in $\tau_G(x)$ is 
\begin{equation}\label{gn-n-0}
    c_k(G)=\sum_{i=k}^{n}(-1)^{n-i}a_i(G)L(i,k).
\end{equation}
On the other hand,
\begin{align*}
 \Lah((-1)^n\sigma_G(-y))&=\sum_{i=0}^n(-1)^{n-i}a_i(G)\Lah(y^i)\\
 &=\sum_{i=0}^n(-1)^{n-i}a_i(G)\left(\sum_{k=0}^i L(i,k)x^k\right)\\
 &=\sum_{k=0}^n \left(\sum_{i=k}^{n}(-1)^{n-i}a_i(G)L(i,k)\right)x^k,
\end{align*}
which, together with \eqref{gn-n-0}, gives \eqref{bn-09-8}. 
  
(2) Suppose that $H$ has $m$ vertices.  By \cite[Theorem 3.13]{Brenti92},
\[
\sigma_{G\vee H}(x)=\sigma_G(x) \sigma_H(x).
\]
Hence, using \eqref{bn-09-8} and the definition of the star product, we obtain
\begin{align*}
 \tau_{G\vee H}(x)&=\Lah\left((-1)^{n+m}\sigma_{G\vee H}(-y)\right)\\
 &=\Lah\left((-1)^{n}\sigma_{G}(-y)\cdot (-1)^{m}\sigma_{H}(-y)\right)\\
 &=\Lah\left(\Lah^{-1}(\tau_G(x))\cdot \Lah^{-1}(\tau_H(x))\right)=\tau_G(x)\star\tau_H(x),
\end{align*}
as desired. 
\end{proof}

\section{A combinatorial formula for \texorpdfstring{$x^p\star x^q$}{monomial star products}}\label{sec:matching}

The aim of this section is to give a   combinatorial formula for the product  $x^p\star x^q$ for $p,q\geq 1$, which is based on a weighted  counting of matchings of a certain bipartite graph. 

\subsection{Statement of the main result}

Fix $p,q\geq1$.  Let
\[
 \mathsf R_p=\{r_1,\ldots,r_p\},\qquad
 \mathsf B_q=\{b_1,\ldots,b_q\},
 \qquad r_\ast=r_p,\quad b_\ast=b_q.
\]
We also write  $\mathsf R_0=\mathsf R_p\setminus\{r_\ast\}$ and
$\mathsf B_0=\mathsf B_q\setminus\{b_\ast\}$.  Let
\[
\widehat{\mathsf R}_0=\{\widehat{r}_1,\ldots, \widehat{r}_{p-1}\}\ \ \text{and}\ \ \widehat{\mathsf B}_0=\{\widehat{b}_1,\ldots, \widehat{b}_{q-1}\}
\]
be the disjoint copies of $\mathsf R_0$ and $\mathsf B_0$.

\begin{definition}\label{def:Gamma}
Let $\Gamma_{p,q}$ be the bipartite graph with vertex parts
\[
 \begin{aligned}
 X_{p,q}=\mathsf R_p\sqcup\mathsf B_q,\ \ \ 
 Y_{p,q}=\widehat{\mathsf B}_0\sqcup
          \widehat{\mathsf R}_0\sqcup\{\omega\},
 \end{aligned}
\]
and edges
\begin{align*}
 E(\Gamma_{p,q})={}&
 \bigl\{\{r,\widehat b\}:r\in\mathsf R_p,
                   b\in\mathsf B_0\bigr\}\\
 &\sqcup
 \bigl\{\{b,\widehat r\}:b\in\mathsf B_q,
                   r\in\mathsf R_0\bigr\}\\
 &\sqcup
 \bigl\{\{v,\omega\}:v\in X_{p,q}\bigr\}.
\end{align*}
In other words, $\Gamma_{p,q}$ is the disjoint union of the complete bipartite graphs $K_{p,q-1}$ and 
the $K_{q,p-1}$, together with a vertex $\omega$   adjacent to every vertex in 
$X_{p,q}$.  See Figure~\ref{fig:Gamma23} for an illustration of $\Gamma_{p,q}$ with $(p,q)=(2,3)$.
\end{definition}

\begin{figure}[htbp]
\centering
\begin{tikzpicture}[
    dot/.style={circle,fill=black,inner sep=1.7pt},
    edge/.style={line width=0.55pt}
]
% The K_{2,2} component.
\node[dot,label=left:$r_1$] (r1) at (0,3.3) {};
\node[dot,label=left:$r_2$] (r2) at (0,2.4) {};
\node[dot,label=right:$\widehat b_1$] (hb1) at (4.2,3.3) {};
\node[dot,label=right:$\widehat b_2$] (hb2) at (4.2,2.4) {};
\foreach \r in {r1,r2}
  \foreach \b in {hb1,hb2}
    \draw[edge] (\r)--(\b);

% The K_{3,1} component.
\node[dot,label=left:$b_1$] (b1) at (0,1.1) {};
\node[dot,label=left:$b_2$] (b2) at (0,0.25) {};
\node[dot,label=left:$b_3$] (b3) at (0,-0.60) {};
\node[dot,label=right:$\widehat r_1$] (hr1) at (4.2,0.25) {};
\foreach \b in {b1,b2,b3}
  \draw[edge] (\b)--(hr1);

% The universal vertex.
\node[dot,label=right:$\omega$] (w) at (5.05,-1.65) {};
\foreach \v in {r1,r2,b1,b2,b3}
  \draw[edge] (\v)--(w);
\end{tikzpicture}
\caption{The bipartite graph $\Gamma_{2,3}$.}
\label{fig:Gamma23}
\end{figure}

For a   graph $B$, let $\Match(B)$ denote the set of matchings on $B$.  Define
\[
 \Psi_{p,q}(x)
   =\sum_{M\in\Match(\Gamma_{p,q})}x^{p+q-|M|},
\]
where $|M|$ is the number of edges in $M$. Our main result in this section can be stated as follows.

\begin{theorem}\label{thm:monomial-model}
For all $p,q\geq1$,
\begin{equation}\label{eq:monomial-model}
 x^p\star x^q=\Psi_{p,q}(x).
\end{equation}
\end{theorem}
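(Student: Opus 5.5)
The plan is to compute both sides of \eqref{eq:monomial-model} through a bivariate exponential generating function and compare coefficients. First I would compute the left-hand side. Recall the classical identity $\sum_{n\ge k}L(n,k)z^n/n!=\frac{1}{k!}\bigl(\frac{z}{1-z}\bigr)^k$. Summing over $k$ gives
\[
\sum_{n\ge0}\Lah(y^n)\frac{z^n}{n!}=\exp\Bigl(\frac{xz}{1-z}\Bigr).
\]
Inverting $w=z/(1-z)$, i.e.\ $z=w/(1+w)$, and using \eqref{eq:Lah-inverse-intro}, the same identity gives
\[
\sum_{n\ge0}\Lah^{-1}(x^n)\frac{w^n}{n!}=\exp\Bigl(\frac{yw}{1+w}\Bigr).
\]
Hence
\[
\sum_{p,q}\Lah^{-1}(x^p)\Lah^{-1}(x^q)\frac{u^pv^q}{p!\,q!}=\exp(yW),\qquad W=\frac{u}{1+u}+\frac{v}{1+v}.
\]
Since $\Lah$ acts coefficientwise, applying it turns $\exp(yW)$ into $\exp\bigl(xW/(1-W)\bigr)$. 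A short computation gives $1-W=(1-uv)/((1+u)(1+v))$, and therefore
\begin{equation*}
\sum_{p,q\ge0}(x^p\star x^q)\frac{u^pv^q}{p!\,q!}=\exp\Bigl(x\,\frac{u+v+2uv}{1-uv}\Bigr).
\end{equation*}
I would check this against the example $x^2\star x^3$ computed earlier as a sanity test.

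Second, I would compute $\Psi_{p,q}$ in closed form. A matching of $\Gamma_{p,q}$ consists of a matching of $K_{p,q-1}$ of size $i$, a matching of $K_{q,p-1}$ of size $j$, and optionally one edge at $\omega$ whose endpoint in $X_{p,q}$ is not already covered. This gives
\[
\Psi_{p,q}(x)=\sum_{i,j}\binom{p}{i}\binom{q-1}{i}i!\binom{q}{j}\binom{p-1}{j}j!\,\bigl(x^{p+q-i-j}+(p+q-i-j)\,x^{p+q-i-j-1}\bigr).
\]
I would then reorganize this sum by thinking of $x$ as marking blocks of a structure on the labelled set $\mathsf R_p\sqcup\mathsf B_q$ (red and blue points). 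The block generating function
\[
\frac{u+v+2uv}{1-uv}=\sum_{k\ge0}(uv)^k(u+v+2uv)
\]
has a natural reading: a connected block is an alternating red/blue path (a list) whose two ends are red, blue, or mixed. The factor $2$ accounts for the two orientations of the mixed case, and the denominator accounts for arbitrary length. By the exponential formula, $p!\,q!\,[u^pv^q]\exp(x\cdot\text{block})$ counts such path covers of the $p$ red and $q$ blue points, weighted by $x^{\#\text{blocks}}$. Since each path on $\ell$ vertices has $\ell-1$ edges, this weight equals $x^{p+q-\#\text{edges}}$.

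The third and main step is a weight-preserving bijection between these alternating path covers and $\Match(\Gamma_{p,q})$. Here is how I would set it up. An edge $\{r,\widehat b\}$ with $b\in\mathsf B_0$ will encode ``the successor of $r$ is $b$''. An edge $\{b,\widehat r\}$ with $r\in\mathsf R_0$ will encode ``the successor of $b$ is $r$''. The distinguished vertices $r_\ast,b_\ast$, which have no hatted copies, will serve to break cycles and fix the labelling of the bijection, and the optional $\omega$-edge will supply the extra factor. I expect that making this bijection exactly right is the real obstacle: the matching on its own only describes a successor relation, which could contain cycles, and the role of $\omega$ and of $r_\ast,b_\ast$ must exactly compensate for that.

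If the bijection resists, I would fall back on a purely algebraic route. Extract $p!\,q!\,[u^pv^q]$ of $\exp\bigl(x(u+v+2uv)/(1-uv)\bigr)$ explicitly as a finite sum, by expanding the exponential in powers of $x$ and using $\binom{n+k-1}{k}$-type coefficients from $(1-uv)^{-m}$. I would then show that this sum agrees with the closed form for $\Psi_{p,q}$ above via Vandermonde-type identities, proceeding coefficient by coefficient in $x$. Equivalently, I could verify that both sides satisfy the same recurrences in $p$ and $q$, namely those obtained by differentiating the generating function in $u$ and in $v$, together with the same initial values at $p=1$ or $q=1$.
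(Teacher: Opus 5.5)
\textbf{There is a genuine gap: the step that carries the content of the theorem is missing.}

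Your first step is correct and is exactly the paper's identity \eqref{eq:kernel}. Your explicit double sum for $\Psi_{p,q}$ is also right. So is your reading of $p!\,q!\,[u^pv^q]K(u,v;x)$ as alternating red/blue list covers weighted by $x^{\#\text{lists}}$; it reproduces $x^2\star x^3$. But these are the easy half. The whole content of the theorem is identifying the matching polynomial of $\Gamma_{p,q}$ with these coefficients, and your proposal does not do this. The bijection is announced but never constructed, and the fallback is only named.

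The obstruction you flag is real. Under your encoding, a matching of $\Gamma_{p,q}-\omega$ is an alternating partial successor function in which $r_\ast$ and $b_\ast$ have no predecessor. Such a function can still contain alternating cycles inside $\mathsf R_0\sqcup\mathsf B_0$; for $p=q=2$, the matching $\{r_1,\widehat b_1\},\{b_1,\widehat r_1\}$ is a $2$-cycle. The $\omega$-edge only marks a vertex with no successor. So your bijection would have to prove that acyclic alternating structures with $e$ arcs are equinumerous with the union of two sets:
\begin{itemize}
\item cycle-allowed structures with sources $r_\ast,b_\ast$ and $e$ arcs;
\item such structures with $e-1$ arcs and one marked successorless vertex.
\end{itemize}
Nothing in your sketch says how cycles are absorbed by $r_\ast$, $b_\ast$ and $\omega$, and that identity is the theorem itself.

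The fallbacks are also not proofs. Extracting $[x^m]$ gives, on one side, a single sum over $k$ with factors $2^{p+q-m-2k}\binom{m+k-1}{k}$. On the other side it gives a convolution of rook numbers of $K_{p,q-1}$ and $K_{q,p-1}$, plus the $\omega$-term. You state no specific identity linking them, and ``Vandermonde-type'' does not obviously cover it. The recurrence route would need recurrences for $\Psi_{p,q}$ coming from the matching structure, which you do not derive; they are not evident, since both complete bipartite pieces change when $p$ or $q$ changes.

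The paper bridges this with a generating-function computation on the matching side:
\begin{itemize}
\item $\Psi_{p,q}=(1+\partial_x)\Psi^\circ_{p,q}$ and $\Psi^\circ_{p,q}=x^{p+q}Z_{p,q-1}(x^{-1})Z_{q,p-1}(x^{-1})$.
\item $x^sZ_{r+1,s}(x^{-1})=s!\,[\eta^s]\e^{x\eta}(1+\eta)^{r+1}$, so $\Psi^\circ_{r+1,s+1}/(r!\,s!)=x^2[\xi^r\eta^s]\e^{x(\xi+\eta)}(1+\xi)^{s+1}(1+\eta)^{r+1}$.
\item The exponents $s+1$ and $r+1$ are tied to the extraction indices, so this cannot be summed term by term. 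A bivariate Lagrange inversion (Lemma~\ref{lem:coefficient-substitution}) handles it, using the substitution $\xi=u(1+v)/(1-uv)$, $\eta=v(1+u)/(1-uv)$. This turns the expression into $[u^rv^s]J(u,v)\e^{x\Phi(u,v)}$ and gives the closed form $\mathcal Q=x^2J\e^{x\Phi}$.
\item The paper then compares \emph{shifted} generating functions: $(1+\partial_x)\mathcal Q=\partial_u\partial_v K$, and $\partial_u\partial_v K$ is the generating function of $x^p\star x^q$ with weights $u^{p-1}v^{q-1}/((p-1)!\,(q-1)!)$.
\end{itemize}
Your proposal needs this device, or a genuinely constructed bijection, to be a proof.
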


\subsection{Several lemmas for the proof of Theorem \ref{thm:monomial-model}}

We divide the proof of Theorem \ref{thm:monomial-model} into several  lemmas.   
To state the first lemma, let $\Gamma_{p,q}-\omega$ be the bipartite graph obtained from $\Gamma_{p,q}$ by removing the vertex $\omega$ along with all edges adjacent to $\omega$. Define
\[
 \Psi^{\circ}_{p,q}(x)
=\sum_{M\in\Match(\Gamma_{p,q}-\omega)}x^{p+q-|M|}.
\]

The first lemma gives  a relation connecting $ \Psi_{p,q}(x)$ and $\Psi^{\circ}_{p,q}(x)$. 

\begin{lemma}\label{lem:universal}
For   $p,q\geq1$,
\begin{equation}\label{eq:universal}
 \Psi_{p,q}(x)=(1+\partial_x)\Psi^{\circ}_{p,q}(x).
\end{equation}
\end{lemma}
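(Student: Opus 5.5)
We prove the identity by splitting matchings of $\Gamma_{p,q}$ according to how they meet $\omega$, and then recognizing $\partial_x\Psi^{\circ}_{p,q}(x)$ as a sum over matched pairs. The only properties of $\omega$ we use are that it is adjacent to every vertex of $X_{p,q}$ and that $\Gamma_{p,q}-\omega$ is bipartite with part $X_{p,q}$.

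First, every matching $M\in\Match(\Gamma_{p,q})$ either avoids $\omega$ or contains exactly one edge $\{v,\omega\}$ with $v\in X_{p,q}$. Matchings avoiding $\omega$ are exactly the elements of $\Match(\Gamma_{p,q}-\omega)$, and their total contribution is $\Psi^{\circ}_{p,q}(x)$. Hence it remains to show
\[
\sum_{\substack{M\in\Match(\Gamma_{p,q})\\ \omega\text{ covered}}} x^{p+q-|M|}=\partial_x\Psi^{\circ}_{p,q}(x).
\]

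Next, consider a matching covering $\omega$, say $M=M'\cup\{\{v,\omega\}\}$. Then $M'$ is a matching of $\Gamma_{p,q}-\omega$, and $v\in X_{p,q}$ is not covered by $M'$. Since $\omega$ is adjacent to all of $X_{p,q}$, this gives a bijection between matchings covering $\omega$ and pairs $(M',v)$, where $M'\in\Match(\Gamma_{p,q}-\omega)$ and $v$ is a vertex of $X_{p,q}$ uncovered by $M'$. In the bipartite graph $\Gamma_{p,q}-\omega$, each edge of $M'$ covers exactly one vertex of $X_{p,q}$, and $|X_{p,q}|=p+q$. So there are exactly $p+q-|M'|$ choices of $v$. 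Since $|M|=|M'|+1$, the left side above equals
\[
\sum_{M'\in\Match(\Gamma_{p,q}-\omega)}(p+q-|M'|)\,x^{p+q-|M'|-1}=\partial_x\Psi^{\circ}_{p,q}(x).
\]

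Adding the two contributions gives $\Psi_{p,q}(x)=(1+\partial_x)\Psi^{\circ}_{p,q}(x)$. The one point needing care is the count of uncovered vertices, which relies on $\Gamma_{p,q}-\omega$ being bipartite with $X_{p,q}$ as one part, as set up in Definition~\ref{def:Gamma}.
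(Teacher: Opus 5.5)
Your proof is correct and follows the paper's argument: you split matchings by whether they cover $\omega$, and counting the $p+q-|M'|$ vertices of $X_{p,q}$ left uncovered by $M'$ produces the $\partial_x\Psi^{\circ}_{p,q}(x)$ term. You also spell out why that count holds (each edge of $M'$ covers exactly one vertex of $X_{p,q}$), which the paper leaves implicit.
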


\begin{proof}
Notice that the set  $\Match(\Gamma_{p,q})$ of matchings on $\Gamma_{p,q}$ can be expressed as the disjoint union of  $\Match(\Gamma_{p,q}-\omega)$ and $\{M\cup\{\{v,\omega\}\}\colon M\in\Match(\Gamma_{p,q}-\omega)\}$, where $v$ is a vertex in $X_{p,q}$ which is not matched with any edge in $M$. Moreover, for any given $M\in\Match(\Gamma_{p,q}-\omega)$, there are $p+q-|M|$ choices of $v\in X_{p,q}$. Theses observations yield  \eqref{eq:universal}.
\end{proof}

The second lemma is a formula for $\Psi^{\circ}_{p,q}(x)$.
Let  
\[
Z_{m,n}(x)=\sum_{M\in\Match(K_{m,n})}x^{|M|}
\]
be the matching generating polynomial of the complete bipartite graph $K_{m,n}$. It is easy to verify that 
\[
 Z_{m,n}(x)=\sum_{k\geq0}\binom{m}{k}\binom{n}{k}k!\,x^k.
\] 
Notice that   
\begin{equation}\label{eq:Z-coefficient-form} 
 x^nZ_{m,n}(x^{-1})
 =n!\,[z^n]\e^{xz}(1+z)^m,
\end{equation}
which can be checked as follows:
\begin{align*}
 n!\,[z^n]\e^{xz}(1+z)^m
 &=n!\sum_{k=0}^{n}\frac{x^{n-k}}{(n-k)!}\binom{m}{k}\\
 &=x^n\sum_{k=0}^{n}\binom{m}{k}\binom{n}{k}k!\,x^{-k}
 =x^nZ_{m,n}(x^{-1}).
\end{align*} 

\begin{lemma}\label{lem:complete-bipartite}
For   $p,q\geq1$,
\begin{align}
 \Psi^{\circ}_{p,q}(x)
 =x^{p+q}Z_{p,q-1}(x^{-1})Z_{q,p-1}(x^{-1}).
 \label{eq:Psi-circle-product} 
\end{align}
\end{lemma}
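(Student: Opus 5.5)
The plan is to use the structure of $\Gamma_{p,q}-\omega$ directly. By Definition~\ref{def:Gamma}, once $\omega$ is deleted the graph $\Gamma_{p,q}-\omega$ is the disjoint union of two complete bipartite graphs. The first is $K_{p,q-1}$, with parts $\mathsf R_p$ and $\widehat{\mathsf B}_0$. The second is $K_{q,p-1}$, with parts $\mathsf B_q$ and $\widehat{\mathsf R}_0$. These two components share no vertices and no edges. Hence every matching $M$ of $\Gamma_{p,q}-\omega$ splits uniquely as $M=M_1\sqcup M_2$ with $M_1\in\Match(K_{p,q-1})$ and $M_2\in\Match(K_{q,p-1})$. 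Conversely, every such pair gives a matching, and $|M|=|M_1|+|M_2|$.

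With this bijection the generating function factors:
\[
 \Psi^{\circ}_{p,q}(x)=x^{p+q}\sum_{M\in\Match(\Gamma_{p,q}-\omega)}(x^{-1})^{|M|}
 =x^{p+q}\Bigl(\sum_{M_1}(x^{-1})^{|M_1|}\Bigr)\Bigl(\sum_{M_2}(x^{-1})^{|M_2|}\Bigr).
\]
Here the first sum runs over $M_1\in\Match(K_{p,q-1})$ and the second over $M_2\in\Match(K_{q,p-1})$. By the definition of $Z_{m,n}$, these two factors are $Z_{p,q-1}(x^{-1})$ and $Z_{q,p-1}(x^{-1})$, which gives \eqref{eq:Psi-circle-product}.

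The identity is best read as a statement about Laurent polynomials in $x$. Both sides are in fact polynomials, because the exponent $p+q-|M|$ is nonnegative: a matching in a bipartite graph has at most as many edges as the size of either part, and $|X_{p,q}|=p+q$.

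There is no substantial obstacle here. The only point needing care is the claim that the two complete bipartite pieces are genuinely vertex-disjoint once $\omega$ is removed. This holds because every edge of $\Gamma_{p,q}-\omega$ either joins $\mathsf R_p$ to $\widehat{\mathsf B}_0$ or joins $\mathsf B_q$ to $\widehat{\mathsf R}_0$, and the four sets involved are pairwise disjoint.
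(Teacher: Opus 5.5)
Your proposal is correct and follows the paper's own argument: once $\omega$ is deleted, $\Gamma_{p,q}$ is the disjoint union of $K_{p,q-1}$ and $K_{q,p-1}$, so the matching generating function factors as $x^{p+q}Z_{p,q-1}(x^{-1})Z_{q,p-1}(x^{-1})$. You spell out the bijection $M\leftrightarrow(M_1,M_2)$ that the paper leaves implicit.
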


\begin{proof}
This directly  follows from the definition of $ \Psi^{\circ}_{p,q}(x)$  as well as the fact that $ \Gamma_{p,q}-\omega$ is the disjoint union of   $K_{p,q-1}$ and  $K_{q,p-1}$.
\end{proof}

Consider the generating function of $\Psi^{\circ}_{p,q}(x)$:
\[
 \mathcal Q(u,v;x)=
 \sum_{p,q\geq1}\Psi^{\circ}_{p,q}(x)
 \frac{u^{p-1}}{(p-1)!}\frac{v^{q-1}}{(q-1)!}.
\]
Denote 
\begin{equation*} 
 \Phi(u,v)=\frac{u+v+2uv}{1-uv},
 \qquad K(u,v;x)=\exp\bigl(x\Phi(u,v)\bigr).
\end{equation*}

The third lemma formulates $\mathcal Q(u,v;x)$ by using $K(u,v;x)$.

\begin{lemma}\label{lem:shifted-algebraic}
We have 
\begin{equation}\label{eq:Q-algebraic}
 \mathcal Q(u,v;x)
 =K(u,v;x)\frac{x^2(1+u)(1+v)}{(1-uv)^3}.
\end{equation}
\end{lemma}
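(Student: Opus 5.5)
By Lemma~\ref{lem:complete-bipartite} and \eqref{eq:Z-coefficient-form}, each factor is a coefficient extraction. We have
\[
 x^{q-1}Z_{p,q-1}(x^{-1})=(q-1)!\,[s^{q-1}]\e^{xs}(1+s)^p,
 \qquad
 x^{p-1}Z_{q,p-1}(x^{-1})=(p-1)!\,[t^{p-1}]\e^{xt}(1+t)^q .
\]
Hence
\[
 \Psi^{\circ}_{p,q}(x)=x^2\,(p-1)!\,(q-1)!\,[s^{q-1}t^{p-1}]\,\e^{x(s+t)}(1+s)^p(1+t)^q .
\]
In the generating function $\mathcal Q$ the factorials cancel, so
\[
 \mathcal Q(u,v;x)=x^2\sum_{p,q\ge1}u^{p-1}v^{q-1}\,[s^{q-1}t^{p-1}]\,\e^{x(s+t)}(1+s)^p(1+t)^q .
\]
The plan is to evaluate this double sum as a residue, i.e.\ a Lagrange-inversion type computation. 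Summing over $p$ and $q$ first gives geometric series.

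Write $[s^{q-1}]F=\frac{1}{2\pi i}\oint F\,s^{-q}\,ds$, and similarly in $t$. For small $u,v$ and suitable contours, the sums become
\[
 \sum_{p\ge1}u^{p-1}(1+s)^p t^{-p}=\frac{(1+s)/t}{1-u(1+s)/t},
 \qquad
 \sum_{q\ge1}v^{q-1}(1+t)^q s^{-q}=\frac{(1+t)/s}{1-v(1+t)/s}.
\]
Therefore
\[
 \mathcal Q=\frac{x^2}{(2\pi i)^2}\oint\!\!\oint \frac{\e^{x(s+t)}(1+s)(1+t)}{(t-u(1+s))(s-v(1+t))}\,ds\,dt .
\]
The contours should be chosen so that $|u(1+s)|<|t|$ and $|v(1+t)|<|s|$; small circles with $|u|,|v|$ much smaller suffice. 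The integrand then has a single simple pole inside the contours, at the solution of $t=u(1+s)$, $s=v(1+t)$. Solving this linear system gives
\[
 s_0=\frac{v(1+u)}{1-uv},\qquad t_0=\frac{u(1+v)}{1-uv},
\]
and hence
\[
 s_0+t_0=\frac{u+v+2uv}{1-uv}=\Phi(u,v),
\]
which explains the factor $K(u,v;x)$.

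The residue is $\e^{x(s_0+t_0)}(1+s_0)(1+t_0)/J$, where $J$ is the Jacobian of the map $(s,t)\mapsto(t-u(1+s),\,s-v(1+t))$. That Jacobian is
\[
 J=\det\begin{pmatrix}-u&1\\1&-v\end{pmatrix}=uv-1 .
\]
We take $|J|=1-uv$ with the orientation giving a positive sign; this must be checked against the $u=v=0$ term, where $\Psi^\circ_{1,1}=x^2$. We also have
\[
 1+s_0=\frac{(1+v)}{1-uv},\qquad 1+t_0=\frac{(1+u)}{1-uv}.
\]
Together these give $x^2K(1+u)(1+v)/(1-uv)^3$, which is \eqref{eq:Q-algebraic}.

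The main obstacle is justifying the two-variable residue rigorously. To avoid contour analysis, I would instead do it formally. First take the $t$-residue: the $t$-integrand has a simple pole at $t=u(1+s)$, and substituting leaves a one-variable expression in $s$. Then apply the formal Lagrange/residue rule
\[
 [s^{-1}]\frac{G(s)}{s-v(1+u(1+s))}
\]
to the linear equation $s=v(1+u+us)$. This equation has the unique small root $s_0$, and the factor $1/(1-uv)$ appears as the derivative term $1-\frac{d}{ds}\bigl(v(1+u+us)\bigr)$.

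As a sanity check, comparing the coefficients of $u^0v^0$, $u^1v^0$ and $u^0v^1$ with $\Psi^\circ_{1,1}=x^2$ and $\Psi^\circ_{2,1}=x^3+2x^2$ verifies the sign and the normalization.
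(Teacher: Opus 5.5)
Your proof is correct. After the shared first step, where both factors of $\Psi^{\circ}_{p,q}$ are rewritten as coefficient extractions via \eqref{eq:Z-coefficient-form}, it takes a different route from the paper.

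The paper never sums the series directly. It proves a formal, coefficientwise two-variable Lagrange-type identity (Lemma~\ref{lem:coefficient-substitution}): $[u^rv^s]\,J(u,v)F(\xi,\eta)=[\xi^r\eta^s](1+\xi)^{s+1}(1+\eta)^{r+1}F(\xi,\eta)$ for every formal power series $F$. This comes from applying a one-variable M\"obius coefficient identity first in $u$ and then in $v$. Summing over $r,s$ with $F=\e^{x(\xi+\eta)}$ gives $\mathcal Q=x^2J\,\e^{x(\xi+\eta)}$.

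You instead sum the geometric series under a Cauchy integral and evaluate the double integral by residues. The two arguments encode the same fixed-point system. Your $(t_0,s_0)$ is the paper's $(\xi,\eta)$, which satisfies $\xi=u(1+\eta)$ and $\eta=v(1+\xi)$. The paper's Jacobian $(1+u)(1+v)/D^3$ reappears in your computation as $(1+s_0)(1+t_0)/(1-uv)$.

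Your route is shorter and shows immediately where $\Phi$ comes from. Because it is analytic, you should state why it proves a formal identity:
\begin{itemize}
\item on the torus $|s|=|t|=\rho$ the double series converges uniformly when $|u|,|v|<\rho/(1+\rho)$;
\item this justifies exchanging sum and integral;
\item it also shows $\mathcal Q$ converges near the origin for each fixed complex $x$, so equality of analytic functions gives equality of coefficients.
\end{itemize}
The paper's route stays entirely formal and yields a reusable coefficient identity valid for arbitrary $F$.

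Two minor points.

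First, speaking of a ``single simple pole'' and fixing the sign of $1/J$ by orientation is not rigorous in two variables. The naive value $h/J$ with $J=uv-1$ has the wrong sign; the correct sign depends on the torus orientation relative to the ordering of the two factors. The iterated computation you sketch removes the issue. For fixed $s$, the $t$-integral only picks up $t=u(1+s)$, since the zero $t=s/v-1$ lies outside the contour. The remaining $s$-integrand has denominator $(1-uv)s-v(1+u)$, so the residue at $s_0$ is $+\e^{x\Phi}(1+s_0)(1+t_0)/(1-uv)$, with no orientation argument needed.

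Second, your sanity-check value is off. Since $\Gamma_{2,1}-\omega=K_{2,0}\sqcup K_{1,1}$ has a single edge, $\Psi^{\circ}_{2,1}=x^3+x^2$. This is indeed $[u^1v^0]$ of $x^2\e^{x\Phi}(1+u)(1+v)/(1-uv)^3$, i.e.\ $[u]\,x^2\e^{xu}(1+u)$. With your value $x^3+2x^2$ the check would fail.
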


\begin{proof}
Write $r=p-1$ and $s=q-1$.  Equation
\eqref{eq:Psi-circle-product} becomes
\[
 \Psi^{\circ}_{p,q}(x)
 =x^2\cdot x^sZ_{r+1,s}(x^{-1})\cdot 
       x^rZ_{s+1,r}(x^{-1}).
\]
Applying \eqref{eq:Z-coefficient-form} gives 
\begin{align*}
 x^sZ_{r+1,s}(x^{-1})
   &=s!\,[\eta^s]\e^{x\eta}(1+\eta)^{r+1},\\
 x^rZ_{s+1,r}(x^{-1})
   &=r!\,[\xi^r]\e^{x\xi}(1+\xi)^{s+1},
\end{align*}
and hence 
\begin{equation*}
 \Psi^{\circ}_{p,q}(x)
 =x^2r!s!\,[\xi^r\eta^s]
 \e^{x(\xi+\eta)}(1+\xi)^{s+1}(1+\eta)^{r+1}.
\end{equation*}

Applying Lemma \ref{lem:coefficient-substitution} below to
$F(\xi,\eta)=\e^{x(\xi+\eta)}$, we
obtain
\[
 \frac{\Psi^{\circ}_{p,q}(x)}{r!s!}
 =x^2[u^rv^s]J(u,v)
   \e^{x(\xi(u,v)+\eta(u,v))},
\]
where $\xi(u,v)$ and $\eta(u,v)$  are defined as in \eqref{nj-9-mk9}. 
Hence we see that 
\begin{align}
 \mathcal Q(u,v;x)
 &=\sum_{r,s\geq0}\frac{\Psi^{\circ}_{r+1,s+1}(x)}{r!s!}u^rv^s\notag\\
 &=x^2J(u,v)\e^{x(\xi(u,v)+\eta(u,v))},
 \label{eq:Q-change-variables}
\end{align}
where $J(u,v)$ is the Jacobian determinant  of $\xi(u,v)$ and $\eta(u,v)$ as given in  \eqref{bn-sn-09}. 
Substituting the following relation 
\[
 \xi(u,v)+\eta(u,v)
 =\frac{u(1+v)+v(1+u)}{1-uv}
 =\frac{u+v+2uv}{1-uv}=\Phi(u,v)
\]
 and  \eqref{bn-sn-09} 
into 
\eqref{eq:Q-change-variables}, we are led to  \eqref{eq:Q-algebraic}.
\end{proof}

The following  is a technical lemma that has been used in Lemma \ref{lem:shifted-algebraic}.

\begin{lemma}
\label{lem:coefficient-substitution}
Let $ D=1-uv$, and 
\begin{equation}\label{nj-9-mk9}
    \xi(u,v)=\frac{u(1+v)}{D},\qquad
 \eta(u,v)=v(1+\xi(u,v))=\frac{v(1+u)}{D},
\end{equation}
with Jacobian determinant
\begin{equation}\label{bn-sn-09}
 J(u,v)=\det\frac{\partial(\xi,\eta)}{\partial(u,v)}
       =\frac{(1+u)(1+v)}{D^3}.    
\end{equation}
Then, for every formal power series $F(\xi,\eta)$ and all $r,s\geq0$,
\begin{equation}\label{eq:coefficient-substitution}
 [u^rv^s]\,J(u,v)F(\xi,\eta)
 = [\xi^r\eta^s]
   (1+\xi)^{s+1}(1+\eta)^{r+1}F(\xi,\eta),
\end{equation}
where, on the right-hand side, $\xi$ and $\eta$ are treated as independent
  variables.
\end{lemma}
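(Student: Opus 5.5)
This is a two-variable Lagrange inversion (Good's formula), and I would prove it by residues. Write the left-hand side as a formal residue
\[
[u^rv^s]\,J(u,v)F(\xi,\eta)=\operatorname{Res}_{u,v}\frac{J(u,v)F(\xi(u,v),\eta(u,v))}{u^{r+1}v^{s+1}}.
\]
The map $(u,v)\mapsto(\xi,\eta)$ has no constant term, and its linear part is the identity, since $\xi=u+O(2)$ and $\eta=v+O(2)$. So it is an invertible formal change of variables. The multivariate formal change-of-variables rule for residues then lets me replace $J\,du\,dv$ by $d\xi\,d\eta$, provided I express $u^{-r-1}v^{-s-1}$ in terms of $\xi,\eta$. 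This rule holds for Laurent series in which the substituted variables are $u\cdot(\text{unit})$ and $v\cdot(\text{unit})$, which is the case here.

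The key step is to invert the substitution explicitly.

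\begin{itemize}
\item From $\eta=v(1+\xi)$ we get $v=\eta/(1+\xi)$.
\item By the symmetry $u\leftrightarrow v$, $\xi\leftrightarrow\eta$ in the definitions, we should also have $\xi=u(1+\eta)$. This checks directly: $u(1+\eta)=u\,\frac{1-uv+v+uv}{D}=\frac{u(1+v)}{D}=\xi$. Hence $u=\xi/(1+\eta)$.
\end{itemize}

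Therefore
\[
u^{-r-1}v^{-s-1}=\xi^{-r-1}(1+\eta)^{r+1}\,\eta^{-s-1}(1+\xi)^{s+1},
\]
and the residue becomes
\[
\operatorname{Res}_{\xi,\eta}\,\xi^{-r-1}\eta^{-s-1}(1+\xi)^{s+1}(1+\eta)^{r+1}F(\xi,\eta),
\]
which is exactly the right-hand side of \eqref{eq:coefficient-substitution}.

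The value of the Jacobian in \eqref{bn-sn-09} needs a separate routine check. Compute the four partials of $\xi=u(1+v)/D$ and $\eta=v(1+u)/D$, using $\partial_uD=-v$ and $\partial_vD=-u$:
\[
\xi_u=\frac{1+v}{D^2},\quad \xi_v=\frac{u(1+u)}{D^2},\quad \eta_u=\frac{v(1+v)}{D^2},\quad \eta_v=\frac{1+u}{D^2}.
\]
The determinant is $(1+u)(1+v)(1-uv)/D^4=(1+u)(1+v)/D^3$, as claimed.

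The main obstacle is justifying the formal residue change-of-variables theorem in two variables, including the sign/orientation issue. A rigorous alternative that I would fall back on is to prove the identity first for monomials $F=\xi^a\eta^b$ by analytic means. For small $|u|,|v|$ the map is a biholomorphism near the origin, and the Cauchy integral over a small torus transforms with the Jacobian; since the Jacobian is $1$ at the origin, the orientation is preserved. The identity then extends to all formal power series $F$, because both sides depend only on finitely many coefficients of $F$ (those with $a\le r$, $b\le s$).

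Alternatively, one can cite Good's multivariate Lagrange inversion formula directly, taking $\xi=u\,\phi_1$ and $\eta=v\,\phi_2$ with $\phi_1=1+\eta$ and $\phi_2=1+\xi$. In the form "$[u^rv^s]\,J\cdot F=[\xi^r\eta^s]\,F\,\phi_1^{r+1}\phi_2^{s+1}$", this is exactly the claimed identity.
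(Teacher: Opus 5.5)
Your proof is correct, but it takes a different route from the paper. You read the identity as the two-variable residue change-of-variables formula (Jacobi's theorem, equivalently Good's multivariate Lagrange inversion in its Jacobian form). The decisive input is the symmetric explicit inverse $u=\xi/(1+\eta)$, $v=\eta/(1+\xi)$, which turns $u^{-r-1}v^{-s-1}$ directly into $\xi^{-r-1}\eta^{-s-1}(1+\xi)^{s+1}(1+\eta)^{r+1}$. You also verify the stated value of $J$, which the paper only asserts.

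The paper never invokes a multivariate theorem. It factors the substitution triangularly as $(u,v)\mapsto(\xi,v)\mapsto(\xi,\eta)$. For fixed $v$, the map $\xi=(1+v)u/(1-vu)$ is a one-variable M\"obius map. The paper proves an elementary identity $[t^r]\,z'(t)H(z)=[z^r](c+dz)^{r+1}H(z)$ for $z=ct/(1-dt)$ by comparing binomial coefficients. It applies this with $c=1+v$, $d=v$, $H(z)=(1+z)F(z,v(1+z))$, using $\partial_u\xi\cdot(1+\xi)=J$ and $c+d\xi=1+\eta$. The remaining extraction is the rescaling $[v^s]G(v(1+\xi))=(1+\xi)^s[\eta^s]G(\eta)$. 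So the paper's extra factor $(1+\xi)$ is exactly the Jacobian of the linear second step, and its two one-variable changes of variables compose to your single two-variable one.

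Your version is shorter and more transparent: it explains where $J$ and the two powers come from, symmetrically in both variables. However, all of its weight rests on the cited multivariate theorem, whereas the paper's proof is self-contained.

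If you want your analytic fallback to stand on its own, note that the image of the torus $|u|=|v|=\epsilon$ is not itself a torus. What you need is that it is homotopic to the standard torus inside $\{\xi\eta\neq0\}$. Writing $\xi=u\,a$, $\eta=v\,b$, the family $(u\,a(tu,tv),\,v\,b(tu,tv))$ for $0\le t\le1$ gives such a homotopy, after which Stokes' theorem applies. The remark that ``the Jacobian is $1$ at the origin'' points toward this but is not the argument itself.
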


\begin{proof}
We first need  a one-variable identity.  Suppose that
\[
 z=\frac{ct}{1-dt}, 
\]
where $c$ is invertible.  Let $H(z)=\sum_{n\geq0}h_nz^n$ be a formal power series.  Then, for
$r\geq0$,
\begin{equation}\label{eq:mobius-coefficient}
 [t^r]\frac{\partial z}{\partial t}\cdot H(z)
 =[z^r](c+dz)^{r+1}H(z).
\end{equation}
This can be justified as follows.  Since
\[
 \frac{\partial z}{\partial t}=\frac{c}{(1-dt)^2},
\]
the contribution of each $h_n$ ($0\leq n\leq r$) to the left-hand side of \eqref{eq:mobius-coefficient} is
\[
 h_n[t^{r-n}]\frac{c^{n+1}}{(1-dt)^{n+2}}
 =h_n\binom{r+1}{r-n}c^{n+1}d^{r-n},
\]
which exactly agrees with the  contribution of $h_n$ to the right-hand side  of \eqref{eq:mobius-coefficient}. 
This verifies \eqref{eq:mobius-coefficient}. 

We now apply \eqref{eq:mobius-coefficient} to prove \eqref{eq:coefficient-substitution}.
Set  $c=1+v$, $d=v$, and 
\begin{equation}\label{x-12}
     z=\frac{cu}{1-du}=\xi.
\end{equation}
Note also that 
\[
v(1+z)=\eta.
\] 
Let 
\begin{align}
     H(z)&=(1+z)F\bigl(z,v(1+z)\bigr)\nonumber\\
     &=(1+\xi)F(\xi, \eta).\label{x-09}
\end{align}
Applying 
\eqref{eq:mobius-coefficient} with $t$ replaced by the variable $u$,
\begin{equation}\label{eq:mobius-coe}
 [u^r]\frac{\partial z}{\partial u}\cdot H(z)
 =[z^r](c+dz)^{r+1}H(z).
\end{equation}

By \eqref{x-12} and \eqref{x-09}, one may compute that 
\begin{equation*}
  \frac{\partial z}{\partial u}\cdot H(z)=J(u,v) F(\xi,\eta).  
\end{equation*}
On the other hand, noting that 
  $c+dz=1+\eta$, we have
\begin{equation*}
    (c+dz)^{r+1}H(z)=(1+\eta)^{r+1}(1+\xi)F(\xi,\eta),
\end{equation*}  
{where $\eta=v(1+\xi)$}. 
So \eqref{eq:mobius-coe} can be rewritten as 
\begin{equation}\label{eq:first-coefficient-step}
 [u^r]J(u,v)F(\xi,\eta)
 =[\xi^r](1+\eta)^{r+1}(1+\xi)F(\xi,\eta),
\end{equation}
where, on the right-hand side,  $\eta=v(1+\xi)$ is regarded as a function in~$\xi$.

Finally, for every series $G(\eta)$, note that 
\[
 [v^s]G\bigl(v(1+\xi)\bigr)
 =(1+\xi)^s[\eta^s]G(\eta).
\]
Therefore, taking $[v^s]$ in \eqref{eq:first-coefficient-step}   yields \eqref{eq:coefficient-substitution}.
\end{proof}

\subsection{Proof of Theorem \ref{thm:monomial-model}}

With the above lemmas, we can now give a proof of Theorem \ref{thm:monomial-model}. 

\begin{proof}[Proof of Theorem~\ref{thm:monomial-model}]
Set $A=(1+u)(1+v)$. Recall that 
\[
 \Phi(u,v)=\frac{u+v+2uv}{1-uv}.
\]
So we have $1+\Phi=A/D$.  By
Lemma \ref{lem:universal} and Lemma \ref{lem:shifted-algebraic},
\begin{align}
 \sum_{p,q\geq1}\Psi_{p,q}(x)
 \frac{u^{p-1}}{(p-1)!}\frac{v^{q-1}}{(q-1)!}
 &=(1+\partial_x)\mathcal Q(u,v;x)\notag\\
 &=(1+\partial_x)\left(\e^{\Phi x}\cdot x^2\cdot \frac{A}{D^3}\right)\notag\\
 &=K(u,v;x)\left(\frac{2xA}{D^3}+\frac{x^2A^2}{D^4}\right).
 \label{eq:Psi-shifted-series}
\end{align}

On the other hand, we claim that 
\begin{equation}\label{eq:kernel}
 \sum_{p,q\ge0}(x^p\star x^q)
     \frac{u^p}{p!}\frac{v^q}{q!}
 =K(u,v;x).
\end{equation}
This can be shown as follows.
First, via direct computation (see for example \cite[Proposition 5.17(b) and (f)]{BRW94}), we have 
\begin{equation}\label{eq:Lah-egfs}
 \sum_{n\ge0}\Lah(y^n)\frac{u^n}{n!}
   =\exp\left(\frac{xu}{1-u}\right)
\end{equation}
and 
\begin{equation}\label{eq:Lah-egfs-2}
 \sum_{n\ge0}\Lah^{-1}(x^n)\frac{v^n}{n!}
   =\exp\left(\frac{yv}{1+v}\right).
\end{equation}
Recalling that $x^p\star x^q
 =\Lah\bigl(\Lah^{-1}(x^p)\Lah^{-1}(x^q)\bigr)$
and using  \eqref{eq:Lah-egfs-2},  the left-hand side of
\eqref{eq:kernel} equals
\begin{align*}
 \Lah\left(
  \exp\left(\frac{yu}{1+u}\right)
  \exp\left(\frac{yv}{1+v}\right)
 \right)
 &=\Lah\left(
  \exp\left(y\left(\frac{u}{1+u}+\frac{v}{1+v}\right)\right)
 \right) \\
 &=\Lah\sum_{n\geq 0}y^n\frac{\left(\frac{u}{1+u}+\frac{v}{1+v}\right)^n}{n!}\\
 &=\exp\left(
  x\frac{\frac{u}{1+u}+\frac{v}{1+v}}
          {1-\frac{u}{1+u}-\frac{v}{1+v}}
 \right),
\end{align*}
where the last equality used \eqref{eq:Lah-egfs}. Then \eqref{eq:kernel} is obtained   by noticing  that 
\[
\frac{\frac{u}{1+u}+\frac{v}{1+v}}
          {1-\frac{u}{1+u}-\frac{v}{1+v}}=\frac{u+v+2uv}{1-uv}=\Phi(u,v).
\]

Comparing  \eqref{eq:Psi-shifted-series} with
\eqref{eq:kernel}, to conclude \eqref{eq:monomial-model}, it suffices to verify that 
the second partial derivative $K_{uv}$ of $K(u,v;x)$ is equal to 
\[
 K(u,v;x)\left(\frac{2xA}{D^3}+\frac{x^2A^2}{D^4}\right).
\]
The calculation is routine and thus omitted.  
\end{proof}

\section{Proof of Theorem \ref{thm:abstract-main}}\label{se-4-k}

For the  purpose of proving Theorem \ref{thm:abstract-main}, we consider a bipartite graph obtained from    $\Gamma_{p,q}$ by adding edges 
\[
 \{r_i,\widehat r_i\}\quad(1\leq i<p),
 \qquad
 \{b_j,\widehat b_j\}\quad(1\leq j<q).
\]
Let 
$\boldsymbol\alpha=(\alpha_1,\ldots,\alpha_{p-1})$ and
$\boldsymbol\beta=(\beta_1,\ldots,\beta_{q-1})$ be two sequences of variables.  Endow the edges 
\[
 \{r_i,\widehat r_i\}\quad(1\leq i<p),
 \qquad
 \{b_j,\widehat b_j\}\quad(1\leq j<q)
\]
with weights $\alpha_i$ and $\beta_j$, respectively, and  all original edges in $\Gamma_{p,q}$
with  weight $1$.  Denote this weighted bipartite graph by
$\Gamma_{p,q}(\boldsymbol\alpha,\boldsymbol\beta)$. In  Figure \ref{fig:weighted-Gamma23}, we illustrate  the weighted bipartite graph $\Gamma_{2,3}(\boldsymbol\alpha,\boldsymbol\beta)$ where dashed  edges are the extra added edges from $\Gamma_{2,3}$.

\begin{figure}[htbp]
\centering
\begin{tikzpicture}[
    dot/.style={circle,fill=black,inner sep=1.7pt},
    edge/.style={line width=0.55pt},
    weight/.style={
        fill=white,
        inner sep=1pt,
        font=\scriptsize
    }
]

% The K_{2,2} component.
\node[dot,label=left:$r_1$] (r1) at (0,3.3) {};
\node[dot,label=left:$r_2$] (r2) at (0,2.4) {};
\node[dot,label=right:$\widehat b_1$] (hb1) at (4.2,3.3) {};
\node[dot,label=right:$\widehat b_2$] (hb2) at (4.2,2.4) {};

\foreach \r in {r1,r2}
  \foreach \b in {hb1,hb2}
    \draw[edge] (\r)--(\b);

% The K_{3,1} component.
\node[dot,label=left:$b_1$] (b1) at (0,1.1) {};
\node[dot,label=left:$b_2$] (b2) at (0,0.25) {};
\node[dot,label=left:$b_3$] (b3) at (0,-0.60) {};
\node[dot,label=right:$\widehat r_1$] (hr1) at (4.2,0.25) {};

\foreach \b in {b1,b2,b3}
  \draw[edge] (\b)--(hr1);

% The universal vertex.
\node[dot,label=right:$\omega$] (w) at (5.05,-1.65) {};

\foreach \v in {r1,r2,b1,b2,b3}
  \draw[edge] (\v)--(w);

% The three added weighted edges.
\draw[edge,dashed]
  (r1)--node[weight,pos=0.52,above]{$\alpha_1$}(hr1);

\draw[edge,dashed]
  (b1)--node[weight,pos=0.52,above]{$\beta_1$}(hb1);

\draw[edge,dashed]
  (b2)--node[weight,pos=0.52,below]{$\beta_2$}(hb2);

\end{tikzpicture}

\caption{The weighted bipartite  graph
$\Gamma_{2,3}(\boldsymbol\alpha,\boldsymbol\beta)$.
The dashed edges have weights
$\alpha_1$, $\beta_1$ and $\beta_2$, respectively.}
\label{fig:weighted-Gamma23}
\end{figure}

Let
\[
w(M)=\prod_{e\in M}w_e
\]
denote the weight of  a matching  $M$ of $\Gamma_{p,q}(\boldsymbol\alpha,\boldsymbol\beta)$. Applying    Theorem \ref{thm:monomial-model}, we are able to obtain the following combinatoiral formula. 

\begin{theorem}\label{thm:weighted-model}
For $p,q\geq 1$,
\begin{align}
\left(x\prod_{i=1}^{p-1}(x+\alpha_i)\right)\star
   &\left(x\prod_{j=1}^{q-1}(x+\beta_j)\right)\notag\\[5pt]
 &\qquad=
 \sum_{M\in\Match(\Gamma_{p,q}(\boldsymbol\alpha,\boldsymbol\beta))}
 w(M)x^{p+q-|M|}.
 \label{eq:weighted-model}
\end{align}
\end{theorem}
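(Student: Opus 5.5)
Expand both sides by bilinearity and compare coefficients of each monomial in $\boldsymbol\alpha,\boldsymbol\beta$. On the left,
\[
x\prod_{i=1}^{p-1}(x+\alpha_i)=\sum_{S\subseteq[p-1]}\Bigl(\prod_{i\in S}\alpha_i\Bigr)x^{p-|S|},
\]
and similarly for the second factor with $T\subseteq[q-1]$. Bilinearity of $\star$ then gives
\[
\sum_{S,T}\alpha_S\beta_T\,\bigl(x^{p-|S|}\star x^{q-|T|}\bigr).
\]
Since $p-|S|\geq1$ and $q-|T|\geq1$, Theorem~\ref{thm:monomial-model} rewrites each term as $\Psi_{p-|S|,q-|T|}(x)$.

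On the right, split the matchings of $\Gamma_{p,q}(\boldsymbol\alpha,\boldsymbol\beta)$ by which weighted edges they use. Let $S$ be the set of indices $i$ with $\{r_i,\widehat r_i\}\in M$, and $T$ the set of $j$ with $\{b_j,\widehat b_j\}\in M$. Then $w(M)=\alpha_S\beta_T$, and the remainder $M'$ is a matching of $\Gamma_{p,q}$ avoiding the vertices $r_i,\widehat r_i$ for $i\in S$ and $b_j,\widehat b_j$ for $j\in T$. Deleting these vertices from $\Gamma_{p,q}$ leaves the following graph:
\begin{itemize}
\item red vertices $\mathsf R_p\setminus\{r_i:i\in S\}$, which has $p-|S|$ elements and still contains $r_\ast$;
\item blue vertices $\mathsf B_q\setminus\{b_j:j\in T\}$, which has $q-|T|$ elements and still contains $b_\ast$;
\item hatted red vertices $\widehat{\mathsf R}_0\setminus\{\widehat r_i:i\in S\}$, which has $p-1-|S|$ elements, and hatted blue vertices $\widehat{\mathsf B}_0\setminus\{\widehat b_j:j\in T\}$, which has $q-1-|T|$ elements;
\item the vertex $\omega$.
\end{itemize}
The edge structure is unchanged: complete bipartite between the remaining reds and the remaining hatted blues, complete bipartite between the remaining blues and the remaining hatted reds, and $\omega$ joined to every remaining unhatted vertex.

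So this graph is isomorphic to $\Gamma_{p-|S|,q-|T|}$. The key point is that $S$ indexes only non-starred reds and the hatted copies removed are exactly the hatted copies of the removed reds. Hence the sizes match the pattern $(p',q'-1)$ and $(q',p'-1)$ with $p'=p-|S|$ and $q'=q-|T|$.

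The exponent also matches: $p+q-|M|=(p-|S|)+(q-|T|)-|M'|$. Summing over $M'$ therefore gives $\alpha_S\beta_T\,\Psi_{p-|S|,q-|T|}(x)$ for each pair $(S,T)$, which equals the corresponding left-hand term. Summing over all $S,T$ gives \eqref{eq:weighted-model}.

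The only point needing care is this isomorphism check: deleting $r_i$ together with $\widehat r_i$ must simultaneously shrink $\mathsf R_p$ and $\widehat{\mathsf R}_0$ by one, with $r_\ast$ kept. This holds because the added edges involve only indices $i<p$ and $j<q$. Everything else is bookkeeping.
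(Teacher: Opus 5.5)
Your proposal is correct and follows the paper's proof. Both expand the two factors over subsets $S,T$, apply bilinearity and Theorem~\ref{thm:monomial-model}, and identify the $\alpha_S\beta_T$-coefficient of the matching sum with $\Psi_{p-|S|,q-|T|}(x)$; your explicit check that deleting the matched pairs leaves a copy of $\Gamma_{p-|S|,q-|T|}$ fills in the step the paper only calls ``readily checked.''
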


\begin{proof}
For $S\subseteq \{1,\ldots, p-1\}$ and $T\subseteq \{1,\ldots, q-1\}$, write
$\alpha_S=\prod_{i\in S}\alpha_i$ and
$\beta_T=\prod_{j\in T}\beta_j$. 
By the construction of $\Match(\Gamma_{p,q}(\boldsymbol\alpha,\boldsymbol\beta))$, it is readily checked that 
\[
 [\alpha_S\beta_T]
 \sum_{M\in\Match(\Gamma_{p,q}(\boldsymbol\alpha,\boldsymbol\beta))} w(M)x^{p+q-|M|}
 =\Psi_{p-|S|,q-|T|}(x),
\]
which, in view of Theorem \ref{thm:monomial-model}, gives 
\begin{equation}\label{bn-0-8}
 [\alpha_S\beta_T]
 \sum_{M\in\Match(\Gamma_{p,q}(\boldsymbol\alpha,\boldsymbol\beta))} w(M)x^{p+q-|M|}
 =x^{p-|S|}\star x^{q-|T|}.
\end{equation}

On the other hand, we see that
\begin{align*}
 x\prod_{i=1}^{p-1}(x+\alpha_i)
   &=\sum_{S\subseteq\{1,\ldots, p-1\}}\alpha_Sx^{p-|S|},\\
 x\prod_{j=1}^{q-1}(x+\beta_j)
   &=\sum_{T\subseteq\{1,\ldots, q-1\}}\beta_Tx^{q-|T|}.
\end{align*}
Hence  the coefficient of
$\alpha_S\beta_T$ on the left-hand side of \eqref{eq:weighted-model} is equal to $x^{p-|S|}\star x^{q-|T|},$ which, along with 
\eqref{bn-0-8}, leads to the formula in  \eqref{eq:weighted-model}. 
\end{proof}

We still need a classical  real-rootedness result due to  Heilmann and Lieb \cite{HL72}.
Suppose that $B=(V,E)$ is a   graph with nonnegative edge weights $(w_e)_{e\in E}$.
Let
\begin{equation*}
 \mu_{B,w}(x)=\sum_{M\in\Match(B)}
 (-1)^{|M|}w(M)x^{|V|-2|M|}.
\end{equation*}
The following result appeared  in \cite[Theorem 4.2]{HL72}, see also  \cite[Theorem 2.2 and Remark~2.3]{Amini19}.

\begin{theorem}[{\cite[Theorem 4.2]{HL72}}]
\label{thm:heilmann-lieb}
Let $B=(V,E)$ be a   graph with nonnegative edge weights.
Then $\mu_{B,w}(x)$ has only real zeros.
\end{theorem}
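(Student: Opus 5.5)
We would prove Theorem~\ref{thm:heilmann-lieb} by induction on $|V|$, using the classical vertex-deletion recurrence together with an interlacing argument. For the induction to go through we prove a stronger statement. Say that a real-rooted polynomial $g$ with positive leading coefficient and $\deg g=\deg f-1$ \emph{interlaces} a real-rooted $f$ if the zeros alternate, $\theta_1\le\lambda_1\le\theta_2\le\cdots$, where the $\theta$'s are the zeros of $f$ and the $\lambda$'s those of $g$. The strengthened claim is: for every vertex $v$ of $B$, $\mu_{B,w}(x)$ is real-rooted and $\mu_{B-v,w}$ interlaces $\mu_{B,w}$. Here $B-v$ carries the restricted weights, and the empty graph has $\mu=1$. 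Every $\mu_{B,w}$ is monic of degree $|V|$, so leading coefficients cause no trouble.

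\textbf{Step 1: the recurrence.} Split the matchings of $B$ by whether $v$ is covered. This gives
\[
 \mu_{B,w}(x)=x\,\mu_{B-v,w}(x)-\sum_{u\sim v}w_{uv}\,\mu_{B-v-u,w}(x).
\]
The sign comes from the extra edge, and the degree drops by two when $u,v$ are removed.

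\textbf{Step 2: reduction to generic weights.} First assume all weights are strictly positive and that the zeros of $\mu_{B-v,w}$ are simple. We remove these assumptions at the end.

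\textbf{Step 3: the inductive step.} Let $\theta_1<\cdots<\theta_{m}$ be the zeros of $\mu_{B-v,w}$, where $m=|V|-1$. By the induction hypothesis applied to $B-v$ and its vertex $u$, each $\mu_{B-v-u,w}$ interlaces $\mu_{B-v,w}$. Interlacing with positive leading coefficients is equivalent to a partial fraction expansion
\[
 \frac{\mu_{B-v-u,w}(x)}{\mu_{B-v,w}(x)}=\sum_k \frac{c_{k}^{(u)}}{x-\theta_k},\qquad c_k^{(u)}\ge0 .
\]
Dividing the recurrence by $\mu_{B-v,w}$ therefore gives
\[
 R(x):=\frac{\mu_{B,w}(x)}{\mu_{B-v,w}(x)}=x-\sum_k\frac{C_k}{x-\theta_k},\qquad C_k=\sum_{u}w_{uv}c_k^{(u)}\ge0 .
\]
We would then count zeros of $R$:
\begin{itemize}
\item $R$ is strictly increasing on each interval between consecutive poles, since $R'=1+\sum C_k/(x-\theta_k)^2>0$.
\item At a pole with $C_k>0$, $R$ tends to $+\infty$ from the left and to $-\infty$ from the right.
\item $R\to\pm\infty$ as $x\to\pm\infty$.
\end{itemize}
Hence $R$ has exactly one zero in each of the $m+1$ intervals $(-\infty,\theta_1),(\theta_1,\theta_2),\dots,(\theta_m,\infty)$. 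That gives $m+1=|V|$ real zeros of $\mu_{B,w}$, which is all of them, and they strictly interlace the $\theta_k$. If some $C_k=0$, then $\theta_k$ is itself a common zero of $\mu_{B,w}$ and $\mu_{B-v,w}$, and the count still closes with non-strict interlacing.

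\textbf{Main obstacle and how to handle it.} The delicate point is the degenerate cases: zero weights, repeated zeros, and common zeros of $\mu_{B-v,w}$ and the $\mu_{B-v-u,w}$. All of these break the clean partial-fraction picture. We would handle them by continuity rather than case analysis. Replace each weight $w_e$ by $w_e+\varepsilon$ and also add tiny positive weights $\varepsilon$ on all non-edges, so that the graph becomes complete with generic positive weights. Then:
\begin{itemize}
\item For such generic parameters the strict version of the argument applies, inducting on complete weighted graphs, where simplicity of the zeros is itself propagated by the strict interlacing.
\item The coefficients of $\mu_{B,w}$ are polynomial in the weights and the degree stays fixed.
\item A limit of real-rooted polynomials of fixed degree is real-rooted (Hurwitz's theorem, or simply continuity of zeros), and weak interlacing is preserved in the limit.
\end{itemize}
Letting $\varepsilon\to0$ gives the theorem for arbitrary nonnegative weights.
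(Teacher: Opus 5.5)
The paper gives no proof of this theorem; it imports it from Heilmann--Lieb and Amini. Your argument is a correct, self-contained proof, but it takes a different route from those sources.

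\textbf{The cited route.} Heilmann--Lieb and Amini work with the multivariate matching polynomial, which has a separate variable $x_v$ for each vertex. They use the same vertex-deletion recurrence, but track imaginary parts instead of counting zeros. By induction, if $\operatorname{Im} x_v>0$ for every $v$, then $\operatorname{Im}\bigl(\mu_{G}/\mu_{G-v}\bigr)\ge \operatorname{Im} x_v>0$. The reason is that $\mu_{G-v-u}/\mu_{G-v}$ is the reciprocal of a number in the upper half-plane, so each term $-w_{uv}\,\mu_{G-v-u}/\mu_{G-v}$ has nonnegative imaginary part. Hence $\mu_G$ never vanishes when all vertex variables lie in the upper half-plane. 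Specializing $x_v=x$ gives real-rootedness, and in fact the stronger statement of real stability. That argument needs no simplicity of zeros, no genericity and no limiting step.

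\textbf{Your route.} Your zero-counting argument is more elementary and univariate. It also yields interlacing, and simple zeros for positively weighted complete graphs, as by-products. The price is the degeneracy bookkeeping, which you handle correctly by perturbing to $K_{|V|}$ with positive weights and passing to the limit. The limit is legitimate because all the polynomials are monic of fixed degree $|V|$, and only real-rootedness, not interlacing, has to survive it.

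\textbf{One simplification.} No genericity beyond positivity is needed on $K_{|V|}$. For a positively weighted complete graph, the induction hypothesis applied to $B-v$ gives that every $\mu_{B-v-u}$ ($u\neq v$) strictly interlaces $\mu_{B-v}$. Hence $c_k^{(u)}>0$ for every $u$ and every $k$, so every $C_k>0$. This already forces $n$ simple zeros strictly interlacing the $\theta_k$, which closes the induction.
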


As a direct application  of Theorem 
\ref{thm:heilmann-lieb}, we have the following corollary. 

\begin{corollary}\label{cor:matching-generating}
Let $B=(V,E)$ be a   graph with nonnegative edge weights.
The weighted matching generating polynomial
\begin{equation*}
 Z_{B,w}(x)=\sum_{M\in\Match(B)}w(M)x^{|M|}
\end{equation*}
has only real nonpositive zeros.  Hence, for every
$N\geq\deg Z_{B,w}$, the reciprocal polynomial
$x^NZ_{B,w}(1/x)$ has only real nonpositive zeros.
\end{corollary}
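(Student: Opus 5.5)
The plan is to relate $Z_{B,w}$ to $\mu_{B,w}$ by the substitution $x\mapsto$ a square, and then read off the location of the zeros from Theorem~\ref{thm:heilmann-lieb}.

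Set $n=|V|$. By definition,
\[
 \mu_{B,w}(x)=\sum_{M}(-1)^{|M|}w(M)x^{n-2|M|}
 =x^{n}\,Z_{B,w}\bigl(-x^{-2}\bigr),
\]
and this identity holds for $x\neq0$. By Theorem~\ref{thm:heilmann-lieb}, every zero of $\mu_{B,w}$ is real.

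Now take a zero $z$ of $Z_{B,w}$. Since $Z_{B,w}(0)=1$ (only the empty matching contributes to the constant term), we have $z\neq0$. Choose $x_0\in\mathbb C$ with $x_0^2=-1/z$; then $x_0\neq0$. Substituting into the identity gives $\mu_{B,w}(x_0)=x_0^nZ_{B,w}(z)=0$. Hence $x_0$ is real, so $x_0^2>0$, and therefore $z=-1/x_0^2$ is a negative real number. Thus every zero of $Z_{B,w}$ is real and strictly negative, and in particular real and nonpositive.

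For the reciprocal statement, let $d=\deg Z_{B,w}$ and $N\geq d$. We have
\[
 x^NZ_{B,w}(1/x)=x^{N-d}\cdot x^dZ_{B,w}(1/x).
\]
The factor $x^dZ_{B,w}(1/x)$ is the reversal of $Z_{B,w}$. Because $Z_{B,w}(0)\neq0$, its zeros are exactly the reciprocals $1/z$ of the zeros $z$ of $Z_{B,w}$, and these are negative. The remaining factor $x^{N-d}$ contributes only zeros at $0$. So all zeros of $x^NZ_{B,w}(1/x)$ are real and nonpositive.

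The only delicate point is the bookkeeping at $0$. This is handled by using $Z_{B,w}(0)=1$, which follows from $w(\emptyset)=1$, together with $x_0\neq0$. No step here is a genuine obstacle.
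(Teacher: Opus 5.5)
Your proof is correct and follows the same route as the paper: both rest on the identity $\mu_{B,w}(x)=x^{|V|}Z_{B,w}(-x^{-2})$ combined with Theorem~\ref{thm:heilmann-lieb}, followed by reversal of the polynomial. The only difference is bookkeeping. The paper uses the parity $\mu_{B,w}(-x)=(-1)^{|V|}\mu_{B,w}(x)$ to factor $\mu_{B,w}$ as $x^{|V|-2m}\prod_i(x^2-\lambda_i)$ and reads off $Z_{B,w}(x)=\prod_i(1+\lambda_i x)$. You instead pull each zero $z$ back through a square root $x_0$ of $-1/z$, which avoids having to justify the multiplicity of the zero of $\mu_{B,w}$ at the origin.
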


\begin{proof}
By definition, 
\begin{equation}\label{eq:mu-Z}
 \mu_{B,w}(x)=x^{|V|}Z_{B,w}(-x^{-2}).
\end{equation} 
Because 
\[
\mu_{B,w}(-x)=(-1)^{|V|}\mu_{B,w}(x),
\]
it follows from 
Theorem~\ref{thm:heilmann-lieb} that the  nonzero roots of $\mu_{B,w}(x)$ are real and occur in
opposite pairs.  Let $m=\deg Z_{B,w}$.   Since $\mu_{B,w}(x)$ is monic, there exist
$\lambda_1,\ldots,\lambda_m\geq0$ such that
\[
 \mu_{B,w}(x)=x^{|V|-2m}\prod_{i=1}^{m}(x^2-\lambda_i).
\]
Comparing this factorization with \eqref{eq:mu-Z} gives
\[
 Z_{B,w}(x)=\prod_{i=1}^{m}(1+\lambda_i x),
 \]
 and so
\[
 x^NZ_{B,w}(1/x)=x^{N-m}\prod_{i=1}^{m}(x+\lambda_i).
\]
Hence both $ Z_{B,w}(x)$ and $x^NZ_{B,w}(1/x)$ have only real nonpositive zeros. This completes  the proof. 
\end{proof}

We are finally in a position to finish the proof of Theorem \ref{thm:abstract-main}. 

\begin{proof}[Proof of Theorem \ref{thm:abstract-main}]
Without loss of generality, we may  assume that $f$ and $g$
are monic.  Let $p=\deg f$ and $q=\deg g$. Since both $f$ and $g$ have only real nonpositive roots, it follows that 
\[
 f(x)=x\prod_{i=1}^{p-1}(x+\alpha_i),\qquad
 g(x)=x\prod_{j=1}^{q-1}(x+\beta_j),
 \qquad \alpha_i,\beta_j\geq0.
\]
Invoking  Theorem \ref{thm:weighted-model} gives  
\begin{equation*}
 f\star g=x^{p+q}
 Z_{\Gamma_{p,q}(\boldsymbol\alpha,\boldsymbol\beta),w}(1/x).
\end{equation*}
Note also that 
$\deg Z_{\Gamma_{p,q}(\boldsymbol\alpha,\boldsymbol\beta),w}(x)\leq p+q-1$.
Applying 
Corollary \ref{cor:matching-generating} allows concludes  that  all zeros of $f\star g$ belong to  $(-\infty,0]$.
\end{proof}

 \subsection*{Acknowledgements}
The work of Philip B. Zhang was supported by the National Natural Science Foundation of China (Grant No. 12171362) and the Tianjin Municipal Natural Science Foundation (Grant No. 25JCYBJC00430). Sophie C.C. Sun was supported by the Natural Science Foundation of Tianjin (Grant No. 25JCQNJC00250).

\subsection*{Declaration of AI usage}

During the development of this work, we used ChatGPT 5.5 and 5.6 to verify our proposed strategies and to assist with the computational experiments. Several subtle details of the construction and computations were jointly figured out  by ChatGPT and the authors.

\end{document}